\title{On the Classification of $LS$-Sequences}
\author{Christian Weiß}
\date{\today}
\newtheorem{thm}{Theorem}[section]
\newtheorem{defi}[thm]{Definition}
\newtheorem{rem}[thm]{Remark}
\newtheorem{lem}[thm]{Lemma}
\newtheorem{cor}[thm]{Corollary}
\newtheorem{exa}[thm]{Example}
\newcommand{\RR}{{\mathbb{R}}}
\newcommand{\ZZ}{{\mathbb{Z}}}
\newcommand{\NN}{{\mathbb{N}}}
\newcommand{\QQ}{{\mathbb{Q}}}
\begin{document} 

\maketitle

\begin{abstract}
This paper addresses the question whether the $LS$-sequences constructed in \cite{Car12} yield indeed a new family of low-discrepancy sequences. While it is well known that the case $S=0$ corresponds to van der Corput sequences, we prove here that the case $S=1$ can be traced back to symmetrized Kronecker sequences and moreover that for $S \geq 2$ none of these two types occurs anymore. In addition, our approach allows for an improved discrepancy bound for $S=1$ and $L$ arbitrary. 
\end{abstract}


\section{Introduction} \label{cha:Introduction} There are essentially three classical families of low-discrepancy sequences, namely Kronecker sequences, digital sequences and Halton sequences (compare \cite{Lar14}, see also \cite{Nie92}). In \cite{Car12}, Carbone constructed a class of one-dimensional low-discrepancy sequences, called $LS$-sequences with $L \in \NN$ and $S \in \NN_0$. The case $S=0$ corresponds to the classical one dimensional Halton sequences, called van der Corput sequences. However, the question whether $LS$-sequences indeed yield a new family of low-discrepancy sequences for $S \geq 1$ or if it is just a different way to write down already known low-discrepancy sequences has not been answered yet. In this paper, we address this question and thereby derive improved discrepancy bounds for the case $S=1$. 

\paragraph{Discrepancy.} Let $S=(z_n)_{n \geq 0}$ be a sequence in $[0,1)^d$. Then the \textbf{discrepancy} of the first $N$ points of the sequence is defined by
$$D_N(S) := \sup_{B \subset [0,1)^d} \left| \frac{A_N(B)}{N} - \lambda_d(B) \right|,$$
where the supremum is taken over all axis-parallel subintervals $B \subset [0,1)^d$ and $A_N(B) := \# \left\{ n \ \mid \ 0 \leq n < N, z_n \in B \right\}$ and $\lambda_d$ denotes the $d$-dimensional Lebesgue-measure. In the following we restrict to the case $d=1$. If $D_N(S)$ satisfies 
$$D_N(S) = O(N^{-1}\log N)$$
then $S$ is called a \textbf{low-discrepancy sequence}. In dimension one this is indeed the best possible rate as was proved by Schmidt in \cite{Sch72}, that there exists a constant $c$ with
	$$D_N(S) \geq c N^{-1} \log N.$$
The precise value of the constant $c$ is still unknown (see e.g. \cite{Lar14}). For a discussion of the situation in higher dimensions see e.g. \cite{Nie92}, Chapter~3.\\[12pt] 
A theorem of Weyl and Koksma's inequality imply that a sequence of points $(z_n)_{n \geq 0}$ is uniformly distributed if and only if
$$\lim_{N \to \infty} D_N(z_n) = 0.$$
Thus, the only candidates for low-discrepancy sequences are uniformly distributed sequences. A specific way to construct uniformly distributed sequences goes back to the work of Kakutani \cite{Kak76} and was later on generalized in \cite{Vol11} in the following sense. 
\begin{defi}
	Let $\rho$ denote a non-trivial partition of $[0,1)$. Then the \textbf{$\rho$-refinement} of a partition $\pi$ of $[0,1)$, denoted by $\rho \pi$, is defined by subdividing all intervals of maximal length positively homothetically to $\rho$. 
\end{defi}
Successive application of a $\rho$-refinement results in a sequence which is denoted by $\left\{ \rho^n \pi \right\}_{n \in \NN}$. The special case of \textbf{Kakutani's $\alpha$-refinement} is obtained by successive $\rho$-refinements where $\rho = \left\{ [0,\alpha), [\alpha, 1) \right\}$. If $\pi$ is the trivial partition $\pi = \left\{ [0,1) \right\}$ then we obtain \textbf{Kakutani's-$\alpha$-sequence}. In many articles Kakutani's $\alpha$-sequence serves as a standard example and the general results derived therein may be applied to this case (see e.g. \cite{CV07}, \cite{DI12}, \cite{IZ15}, \cite{Vol11}). Another specific class of examples of $\rho$-refinement was introduced in \cite{Car12}. 
\begin{defi} 
	Let $L \in \NN, S \in \NN_0$ and $\beta$ be the solution of $L \beta + S \beta^2 = 1$. An \textbf{$LS$-sequence of partitions} $\left\{ \rho_{L,S}^n \pi \right\}_{n \in \NN}$ is the successive $\rho$-refinement of the trivial partition $\pi = \left\{ [0,1) \right\}$ where $\rho_{L,S}$ consists of $L+S$ intervals such that the first $L$ intervals have length $\beta$ and the successive $S$ intervals have length $\beta^2$.
\end{defi}
The partition $\left\{ \rho_{L,S}^n \pi \right\}$ consists of intervals only of length $\beta^n$ and $\beta^{n+1}$. Its total number of intervals is denoted by $t_n$, the number of intervals of length $\beta^n$ by $l_n$ and the number of intervals of length $\beta^{n+1}$ by $s_n$. In \cite{Car12}, Carbone derived the recurrence relations 
\begin{align*}
t_n & = L t_{n-1} + S t_{n-2}\\
l_n & = L l_{n-1} + S l_{n-2}\\
s_n & = L s_{n-1} + S s_{n-2}
\end{align*}
for $n \geq 2$ with initial conditions $t_0 = 1, t_1= L+S, l_0 = 1, l_1 = L, s_0 = 0$ and $s_1 = S$. Based on these relations, Carbone defined a possible ordering of the endpoints of the partition yielding the $LS$-sequence of points. One of the observations of this paper is that this ordering indeed yields a simple and easy-to-implement algorithm but also has a certain degree of arbitrariness.
\begin{defi} \label{def:ordering_LS} Given an $LS$-sequence of partitions $\left\{ \rho_{L,S}^n \pi \right\}_{n \in \NN}$, the corresponding \textbf{$LS$-sequence of points} $(\xi^n)_{n \in \NN}$ is defined as follows: let $\Lambda_{L,S}^1$ be the first $t_1$ left endpoints of the partiton $\rho_{L,S} \pi$ ordered by magnitude. Given $\Lambda_{L,S}^n = \left\{ \xi_1^{(n)}, \ldots, \xi_{t_n}^{(n)} \right\}$ an ordering of $\Lambda_{L,S}^{n+1}$ is then inductively defined as
	\begin{align*}
	\Lambda_{L,S}^{n+1} = \left\{ \right.&  \xi_1^{(n)}, \ldots, \xi_{t_n}^{(n)},\\
	&\left. \psi_{1,0}^{(n+1)} (\xi_1^{(n)}), \ldots, \psi_{1,0}^{(n+1)} (\xi_{l_n}^{(n)}), \ldots, \psi_{L,0}^{(n+1)} (\xi_1^{(n)}), \ldots, \psi_{L,0}^{(n+1)} (\xi_{l_n}^{(n)}), \right. \\
	&\left. \psi_{L,1}^{(n+1)}	(\xi_1^{(n)}), \ldots, \psi_{L,1}^{(n+1)}	(\xi_{l_n}^{(n)}), \ldots, \psi_{L,S-1}^{(n+1)}	(\xi_1^{(n)}), \ldots, \psi_{L,S-1}^{(n+1)}	(\xi_{l_n}^{(n)}) \right\},
	\end{align*}
	where
	$$\psi^{(n)}_{i,j}(x) = x + i\beta^n + j\beta^{n+1}, \qquad x \in \RR.$$
\end{defi}
As the definition of $LS$-sequences might not be completely intuitive at first sight, we illustrate it by an explicit example.
\begin{exa} \label{exa:KF_sequence} For $L=S=1$ the $LS$-sequence coincides with the so-called \textbf{Kakutani-Fibonacci sequence} (see \cite{CIV14}). We have
\begin{align*}
	\Lambda^1_{1,1} & = \left\{ 0, \beta \right\}\\
	\Lambda^2_{1,1} & = \left\{ 0, \beta, \beta^2 \right\}\\
	\Lambda^3_{1,1} & = \left\{ 0, \beta, \beta^2, \beta^3, \beta + \beta^3 \right\}\\
	\Lambda^4_{1,1} & = \left\{ 0, \beta, \beta^2, \beta^3, \beta + \beta^3, \beta^4,\beta+\beta^4,\beta^2+\beta^4  \right\}
\end{align*}
	and so on.
\end{exa}

\begin{thm}[Carbone, \cite{Car12}] If $L \geq S$, then the corresponding $LS$-sequence has low-discrepancy.
\end{thm}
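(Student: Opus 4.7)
The plan is to bound the discrepancy $D_N$ via the standard reduction to intervals of the form $[0,b)$ in dimension one, first at the special indices $N=t_n$ where the geometry of the partition $\rho_{L,S}^n\pi$ can be exploited directly, and then at general $N$ by decomposing the sequence according to the block structure of Definition~\ref{def:ordering_LS}.

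First, I would verify by induction on $n$ that $\Lambda_{L,S}^n$ equals the set of left endpoints of the partition $\rho_{L,S}^n\pi$; this uses only the recursion in Definition~\ref{def:ordering_LS} together with the definition of $\rho$-refinement. Since $\rho_{L,S}^n\pi$ consists of $l_n$ intervals of length $\beta^n$ and $s_n$ intervals of length $\beta^{n+1}$, a direct counting argument bounds $\bigl|A_{t_n}([0,b))-t_n b\bigr|$ by $t_n$ times the length of the interval of $\rho_{L,S}^n\pi$ containing $b$, hence by an absolute constant. Because the characteristic equation of the recurrence for $t_n$ is $x^2=Lx+S$ and its dominant root equals $1/\beta$, we have $t_n \asymp \beta^{-n}$, giving $D_{t_n}\le c/t_n$, which is already much stronger than the target logarithmic rate at these indices.

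Next, for general $N$ with $t_n\le N<t_{n+1}$, I would decompose the $N-t_n$ new points according to Definition~\ref{def:ordering_LS} into at most $L+S-1$ complete groups of size $l_n$ plus one incomplete terminal group. Each complete group is a rigid translate $\psi^{(n+1)}_{i,j}\bigl(\{\xi_1^{(n)},\dots,\xi_{l_n}^{(n)}\}\bigr)$ of the $l_n$ left endpoints of the long intervals of $\rho_{L,S}^n\pi$, embedded in a subinterval of length $l_n\beta^n\le 1$; by the argument of the previous paragraph applied to this shifted sub-partition, its contribution to $\bigl|A_N([0,b))-Nb\bigr|$ is again bounded by an absolute constant. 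Summing the contributions of the $O(1)$ complete groups and handling the terminal incomplete group recursively in terms of $D_{t_{n-1}}, D_{t_{n-2}}, \dots$ yields an estimate of the form $N D_N\le c_1 n+c_2$, and since $n\asymp\log N$ this gives $D_N=O(N^{-1}\log N)$.

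The hypothesis $L\ge S$ enters through the subdominant root $\lambda$ of $x^2-Lx-S$: a short computation shows that $L\ge S$ is precisely equivalent to $|\lambda|<1$, so the ratios $l_n/t_n$, $s_n/t_n$, $t_{n-1}/t_n$ converge and the number of complete groups per level is uniformly bounded. I expect this final telescoping step to be the main obstacle: the ordering in Definition~\ref{def:ordering_LS} processes new points in a rather arbitrary order, and one has to show that even incomplete prefixes at each level contribute only $O(1)$ to $N D_N$ after rescaling, which requires the recursion to close in $n$. Without $L\ge S$ the second root satisfies $|\lambda|\ge 1$, the recursion no longer telescopes, and the cumulative errors grow faster than $\log N$, which is exactly why the stated hypothesis is needed.
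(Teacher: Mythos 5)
The paper does not prove this theorem; it is quoted from Carbone's work \cite{Car12}, whose argument is indeed a counting argument in the spirit of your sketch (the paper's own machinery, via symmetrized Kronecker sequences, only covers $S=1$). So your proposal is being measured against the standard counting proof, and there it has a genuine gap at its very first step. You claim that $\bigl|A_{t_n}([0,b))-t_n b\bigr|$ is bounded by $t_n$ times the length of the interval of $\rho_{L,S}^n\pi$ containing $b$, "by a direct counting argument." This is false. Writing $m_1,m_2$ for the numbers of long and short intervals of $\rho_{L,S}^n\pi$ contained in $[0,b)$, one computes
\begin{equation*}
t_n b - (m_1+m_2) \;=\; (\beta^n-\beta^{n+1})\,(s_n m_1 - l_n m_2) \;+\; O(1),
\end{equation*}
and since $m_1\le l_n$, $m_2\le s_n$ and $l_n,s_n\asymp\beta^{-n}$, direct counting only gives $|s_n m_1-l_n m_2|=O(\beta^{-2n})$, i.e.\ an error of order $t_n$ rather than $O(1)$. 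The whole content of the theorem is to show that the long intervals and the short intervals are \emph{each separately} equidistributed in $[0,1)$, so that $m_1\approx b\,l_n$ and $m_2\approx b\,s_n$ with controlled error; this is proved by a recursion across refinement levels whose error propagation is governed by the second root $\lambda=(L-\sqrt{L^2+4S})/2$ of $x^2-Lx-S$. That is exactly where $L\ge S$ (equivalently $|\lambda|\le 1$) enters. In your sketch the hypothesis is instead attributed to bounding the number of complete groups per level, which is trivially at most $L+S-1$ for all $L,S$; the hypothesis is doing no work where you place it, and the place where it is needed has been waved away as "direct counting."

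A second, related inaccuracy: a complete group $\psi^{(n+1)}_{i,j}\bigl(\{\xi_1^{(n)},\dots,\xi_{l_n}^{(n)}\}\bigr)$ is not "embedded in a subinterval of length $l_n\beta^n$." The points $\xi_1^{(n)},\dots,\xi_{l_n}^{(n)}$ are the left endpoints of the $l_n$ long intervals of $\rho_{L,S}^n\pi$, which are interleaved with the short intervals throughout all of $[0,1)$; the map $\psi^{(n+1)}_{i,j}$ merely shifts each such endpoint within its own interval. So the contribution of a complete group to the discrepancy is again controlled precisely by the equidistribution of the long intervals of $\rho_{L,S}^n\pi$ --- the same unproven statement as above --- and not by "the argument of the previous paragraph applied to a shifted sub-partition." Your overall architecture (estimate at $N=t_n$, then decompose general $N$ into blocks and close a recursion in $n$) is the right one and matches Carbone's strategy, but as written the two central estimates it relies on are asserted rather than proved, and the stated justifications for them do not hold.
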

Carbone's proof is based on counting arguments but does not give explicit discrepancy bounds. These have been derived later by Iac\`{o} and Ziegler in \cite{IZ15} using so-called generalized $LS$-sequences. A more general result implicating also the low-discrepancy of $LS$-sequences can be found in \cite{AH13}. 
\begin{thm} \label{thm:IZ} [Iac\`{o}, Ziegler, \cite{IZ15}, Theorem 1, Section 3] If $(\xi_n)_{n \in \NN}$ is an $LS$-sequence with $L \geq S$ then 
	$$D_N(\xi_n) \leq \frac{B\log(N)}{N |\log(\beta)|} + \frac{B+2}{N},$$
	where 
	$$B = (2L+S-2) \left(\frac{R}{1-S\beta} +1 \right),$$
	with 
	$$R=\max \left\{ |\tau_1|,|\tau_1 + (L+S-2)\lambda_1|\right\},$$
	$\tau_1 = \frac{-L-2S+\sqrt{L^2+4S}}{2\sqrt{L^2+4S}}$ and $\lambda_1 = \frac{-L+\sqrt{L^2+4S}}{2\sqrt{L^2+4S}}$.	
\end{thm}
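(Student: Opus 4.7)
The plan is to bound $D_N$ using a generalized Ostrowski-type decomposition of $N$ adapted to the sequence $(t_n)$. For $t_n \leq N < t_{n+1}$ one writes $N = \sum_{k=0}^{n} c_k t_k$ with digit constraints stemming from the recurrence $t_n = L t_{n-1} + S t_{n-2}$. This expansion slices the first $N$ points of the $LS$-sequence into $n+1$ consecutive blocks whose structure is dictated by the translation maps $\psi^{(k)}_{i,j}$ of Definition \ref{def:ordering_LS}.

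Next, for each axis-parallel interval $B \subseteq [0,1)$ and each block, I would estimate the deviation between the count of sequence points falling into $B$ and its expected measure. At the full step $t_n$, the points $\xi_1,\ldots,\xi_{t_n}$ are exactly the left endpoints of $\rho^n \pi$, so their discrepancy is at most the maximal interval length $\beta^n$. Intermediate blocks correspond to translated copies of earlier subsequences via the maps $\psi^{(k)}_{i,j}$; since these are translations, they preserve the relative combinatorial structure of the base block, so each such block contributes at most $\beta^k$ plus a boundary correction of order $1/N$.

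To extract the explicit constants $B$, $R$, $\tau_1$, $\lambda_1$, I would diagonalize the companion matrix $M = \begin{psmallmatrix} L & S \\ 1 & 0 \end{psmallmatrix}$, whose eigenvalues are $1/\beta$ and $-S\beta$. The hypothesis $L \geq S$ forces $|S\beta| < 1$, which is precisely what makes the error contributed by the sub-dominant eigendirection sum geometrically to the factor $R/(1-S\beta)$ appearing in $B$. Writing the vectors $(l_k,s_k)^T$ and the relevant initial data in the eigenbasis produces $\tau_1$ and $\lambda_1$ as normalized eigencoordinates, while $R$ emerges as a uniform bound on the absolute value of the partial sums encountered when the block discrepancies are combined. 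Verifying the stated closed-form expressions for $\tau_1$ and $\lambda_1$ is then a direct eigenvector computation using $\sqrt{L^2+4S}$ as the normalizing factor.

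Summing the block contributions yields at most $\lfloor \log(N)/|\log\beta| \rfloor + 1$ summands, each bounded by $B/N$, which gives the leading term $B\log(N)/(N|\log\beta|)$; the residual $(B+2)/N$ absorbs the integer ``$+1$'' together with the standard two-endpoint correction intrinsic to one-dimensional discrepancy. I expect the main obstacle to be matching Carbone's specific ordering in Definition \ref{def:ordering_LS} with the block decomposition: the definition applies $\psi^{(n+1)}_{i,0}$ for $i=1,\ldots,L$ to all $t_n$ earlier points but applies $\psi^{(n+1)}_{L,j}$ for $j=1,\ldots,S-1$ only to the first $l_n$ points, and the assumption $L \geq S$ is exactly what is needed so that this asymmetry is controlled by the sub-dominant eigenvalue estimate above rather than degrading the bound.
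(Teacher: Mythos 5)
This theorem is not proved in the paper at all: it is quoted verbatim from Iac\`{o} and Ziegler \cite{IZ15} as background, and the author never reproves it (indeed, the point of the paper is to \emph{improve} on it for $S=1$ by a different route, via Lemma~\ref{lem2} and symmetrized Kronecker sequences). So there is no in-paper proof to compare your attempt against, and it has to be judged on its own merits.

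As a reconstruction of the strategy of \cite{IZ15} your outline is pointed in the right direction: the expansion of $N$ in the numeration system attached to $t_n = Lt_{n-1}+St_{n-2}$, the blockwise discrepancy estimate, and the diagonalization of the companion matrix (whose eigenvalues are indeed $1/\beta$ and $-S\beta$, with $S\beta<1$ exactly when $L\geq S$, which is where the factor $1/(1-S\beta)$ comes from) are all the right ingredients. But as written it is a programme, not a proof, and the two places where you wave your hands are precisely where the work is. First, the claim that a translated block $\psi^{(k)}_{i,j}(\Lambda)$ ``preserves the relative combinatorial structure'' and hence contributes discrepancy $O(\beta^k)$ is not a consequence of being a translation: a translate of a well-distributed set is concentrated in a subinterval and is badly distributed in $[0,1)$. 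What one actually has to prove is Carbone's counting statement that each elementary interval of the partition $\rho_{L,S}^k\pi$ receives the correct number of points of each block, and then track how the surplus/deficit propagates through the recursion; this is where the asymmetry between the $\psi_{i,0}$ and $\psi_{L,j}$ maps bites, and it is not resolved merely by invoking $L\geq S$. Second, the constants are asserted rather than derived: you say $R$ ``emerges as a uniform bound on partial sums'' and that $\tau_1,\lambda_1$ are ``normalized eigencoordinates,'' but you never exhibit the explicit solution of the recurrences for $(l_n,s_n)$ whose coefficients these are, nor verify that the maximum over the partial sums is attained at the two values $|\tau_1|$ and $|\tau_1+(L+S-2)\lambda_1|$. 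Until those two steps are carried out, the stated value of $B$ is not justified, so the attempt should be regarded as an accurate high-level summary of the known proof rather than a proof.
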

It has been pointed out that for parameters $S=0$ and $L=b$, the corresponding $LS$-sequence conincides with the classical van der Corput sequence, see e.g. \cite{AHZ14}.\footnote{If the reader is not familiar with the Definition of van der Coruput sequences, he may consult \cite{Nie92}, Section 3.1.} However, for higher values of $S$ it has been not been proved if $LS$-sequences indeed yield a new family of examples of low-discrepancy sequences or are just a new formulation of some of the well-known ones. We close this gap to a certain extent by showing the following main result:
\begin{thm} \label{main_thm} For $S=1$, the $LS$-sequences is a reordering of the symmetrized Kronecker sequences $(\left\{ n \beta \right\})_{n \in \ZZ}$. For $S \geq 2$ the $LS$-construction neither yields a (re-)ordering of a van der Corput sequence nor of a (symmetrized) Kronecker sequence.
\end{thm}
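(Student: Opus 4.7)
The strategy splits into two independent analyses: for $S = 1$ an explicit identification of the $LS$-points as $\{k\beta\}$ with $k \in \ZZ$; for $S \geq 2$ a structural obstruction. In the latter I restrict to $L \geq S$, under which a short case analysis shows $L^2 + 4S$ is not a perfect square, so $\beta$ is a quadratic irrational.

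For $S = 1$, the defining equation $L\beta + \beta^2 = 1$ yields $\beta^2 = 1 - L\beta \in \ZZ + \ZZ\beta$, and the resulting recursion $\beta^{n+1} = \beta^{n-1} - L\beta^n$ places every $\beta^n$ in $\ZZ + \ZZ\beta$. I would prove by induction on $n$ that every $\xi \in \Lambda_{L,1}^n$ has the form $a + b\beta$ with $a,b \in \ZZ$, hence $\xi = \{k(\xi)\beta\}$ for a unique $k(\xi) \in \ZZ$. It then remains to verify that the set $\{k(\xi) : \xi \in \Lambda_{L,1}^n\}$ exhausts $\ZZ$ as $n \to \infty$. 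The combinatorics of the maps $\psi_{i,0}^{(n+1)}$ together with the above recursion should show that this index set is an integer interval of cardinality $t_n$ which spreads to infinity in both directions. Since then the $LS$-sequence and the symmetrized Kronecker sequence $(\{n\beta\})_{n\in\ZZ}$ bijectively enumerate the same set $\ZZ\beta \bmod 1$, each is a reordering of the other.

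For $S \geq 2$, ruling out van der Corput is immediate: any van der Corput sequence in base $b$ contains only $b$-adic rationals, while $\beta \in \Lambda_{L,S}^1$ is irrational. For the Kronecker part, note that $\beta^n \in \Lambda_{L,S}^n$ for every $n \geq 1$ by iteratively applying $\psi_{1,0}^{(n)}$ to $0$, and that the set of points of $(\{n\alpha\})_{n \in \ZZ}$ is the cyclic subgroup $\langle \alpha \rangle \leq \RR/\ZZ$. Assume for contradiction that the two sets coincide; then $\alpha$ is irrational, and we may write $\beta^n = p_n \alpha - a_n$ with $p_n, a_n \in \ZZ$. Multiplying $L\beta + S\beta^2 = 1$ by $\beta^n$ gives $L\beta^{n+1} + S\beta^{n+2} = \beta^n$, and the irrationality of $\alpha$ then forces the integer recurrence $p_n = L p_{n+1} + S p_{n+2}$ together with $L p_1 + S p_2 = 0$. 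Assuming $\gcd(L,S) = 1$, I would show inductively that $S^k \mid p_1$ for all $k \geq 1$: the initial constraint already yields $S \mid p_1$, and expressing $p_{k+2}$ via the recurrence and repeatedly invoking $\gcd(L^2 + S, S) = \gcd(L^2, S) = 1$ propagates one additional factor of $S$ at each step. This forces $p_1 = 0$, hence $\beta \in \ZZ$, a contradiction. The complementary case $\gcd(L,S) = d \geq 2$ is more direct: the $\ZZ$-combination $(L/d)\beta + (S/d)\beta^2 = 1/d$ puts $1/d$ into the subgroup generated by $\beta$ and $\beta^2$, so if the $LS$-point set equals $\langle\alpha\rangle$ then $1/d \in \langle\alpha\rangle$, which is impossible for an irrational generator.

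The main obstacle will be the $S$-adic descent in the Kronecker step. The bookkeeping of the integer sequence $(p_n)$, aligned with the parallel sequence $(a_n)$ of constant parts, is delicate, but once isolated as a standalone lemma it reduces to a clean induction on the $S$-adic valuation of $p_1$. The corresponding content of the $S = 1$ step, namely showing the index set is a full integer interval at every level, is essentially combinatorial and may be handled either directly from the $\psi_{i,0}^{(n+1)}$ formulas or via an Ostrowski-type numeration in base $\beta$.
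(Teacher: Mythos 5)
Your plan is correct in outline, and its two halves compare differently with the paper. For $S=1$ you are on the paper's route: the paper also writes every point as $a+b\beta$ with $a,b\in\ZZ$ and identifies the resulting integer index, and the step you defer --- that the index set at each level is a contiguous integer interval spreading to infinity in both directions --- is exactly where all of the work sits. The paper does it via Lemma~\ref{lem1}, the identities $\beta^{2n+1}+q_{2n}=q_{2n+1}\beta$ and $\beta^{2n}-q_{2n-1}=-q_{2n}\beta$ for the denominators $q_i$ of the convergents of $\beta=[0;L,L,\dots]$, and then Lemma~\ref{lem2}, which shows by induction that the $n$-th block of the sequence is precisely $\{-q_{2k-1}\beta\},\dots,\{-(q_{2k+1}-1)\beta\}$ for odd $n$ and $\{(q_{2k-2}+1)\beta\},\dots,\{q_{2k}\beta\}$ for even $n$; so expect your ``direct computation from the $\psi^{(n)}_{i,0}$'' to force exactly this continued-fraction bookkeeping. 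For $S\geq 2$ your argument genuinely differs. The paper takes $\alpha=\sum_k\alpha_k\beta^k$ from the sequence, writes $\beta^k=x_k\beta+y_k$ with $S^kx_k,S^ky_k\in\ZZ$, and gets a contradiction because $\beta^{n+1}$ has a coordinate of denominator $S^{n+1}$ while every element of $\{m\alpha\}_m$ has denominators bounded by $S^n$. You instead work with the integer indices $p_n$ defined by $\beta^n\equiv p_n\alpha\pmod 1$, extract the recurrence $p_n=Lp_{n+1}+Sp_{n+2}$ with $Lp_1+Sp_2=0$ from the irrationality of $\alpha$, and run an $S$-adic descent; this does work, and more cleanly than your sketch suggests: reducing the recurrence mod $S$ gives $S\mid p_n$ for all $n$ at once, and dividing through by $S$ reproduces the identical system, so $S^k\mid p_1$ for every $k$ and $p_1=0$, making $\beta$ an integer. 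Your separate handling of $\gcd(L,S)=d\geq 2$ via $1/d\in\langle\alpha\rangle$ is a worthwhile precaution rather than dead weight: when $L$ and $S$ share a factor the denominators $S^{n+1}$ invoked in the paper's argument can partially cancel (e.g.\ $L=4$, $S=2$ gives $\beta^3=(9\beta-2)/2$), so your case split closes a point the paper's proof passes over. Overall both obstructions are $S$-adic; yours trades the denominator-growth claim for a short descent lemma plus one extra case.
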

Let us make the notion of symmetrized Kronecker sequences more precise: given $z \in \RR$, let $\left\{ z \right\} := z - \lfloor z \rfloor$ denote the fractional part of $z$. A (classical) \textbf{Kronecker sequence} is a sequence of the form $(z_n)_{n \geq 0} = (\left\{ n z \right\})_{n \geq 0}$. If $z \notin \QQ$ and $z$ has bounded partial quotients in its continued fraction expansion (see Section \ref{cha:Proof}) then $(z_n)$ has low-discrepancy (\cite{Nie92}, Theorem 3.3). By a \textbf{symmtrized} Kronecker sequence we simply mean a sequence indexed over $\ZZ$ of the form $(\left\{ n z \right\})_{n \in \ZZ}$ with ordering
$$0, \left\{ z \right\}, \left\{ -z \right\}, \left\{ 2z \right\}, \left\{ -2z \right\}, \ldots$$ 
Note that it is still open, whether for $S\geq2$ an $LS$-sequence is a reordering of some other well-known low-discprancy sequence such as a digital-sequence or if the $LS$-construction really yields a new class of examples.\\[12pt]
Our approach does not only give a significantly shorter proof of low-discrepancy of $LS$-sequences for $L=1$ but also improves the known discrepancy bounds by Iac\'{o} and Ziegler in this case.
\begin{cor} \label{cor2} For $S=1$ the discrepancy of the $LS$-sequence $(\xi_n)_{n \in \NN}$ is bounded by
	$$D_N(\xi_n) \leq \frac{3}{N} + \left( \frac{1}{\log(\alpha)} + \frac{L}{\log(L+1)} \right) \frac{\log(N)}{N}$$
	where $\alpha = (1+\sqrt{5})/2$. 	
\end{cor}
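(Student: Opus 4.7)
The plan is to combine Theorem~\ref{main_thm} with the classical continued-fraction discrepancy bound for Kronecker sequences. By Theorem~\ref{main_thm}, for $S=1$ the $LS$-sequence is a reordering of the symmetrized Kronecker sequence $(\{n\beta\})_{n\in\ZZ}$, where $\beta$ is the positive root of $\beta^2+L\beta-1=0$. Rewriting this as $\beta=1/(L+\beta)$ yields the purely periodic continued fraction $\beta=[0;L,L,L,\ldots]$, so every partial quotient equals $L$ and the convergent denominators satisfy $q_n=Lq_{n-1}+q_{n-2}$, coinciding with $l_n$. Moreover the universal Fibonacci-type bound $q_n\geq\alpha^{n-1}$ holds, with $\alpha=(1+\sqrt{5})/2$.

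My first step is to pass from the symmetrized to a classical Kronecker sequence. The first $2k+1$ symmetrized points form the set $\{j\beta\bmod 1:|j|\leq k\}$, which is the rotation (by $k\beta$) of $\{\ell\beta\bmod 1:0\leq\ell\leq 2k\}$; since a rotation of $[0,1)$ alters the discrepancy by at most $1/N$, this reduces the problem to bounding $D_N(\{n\beta\})$ for the classical Kronecker sequence. The reshuffling between the $LS$-ordering and the symmetrized Kronecker ordering is then controlled by the observation that at each milestone $N=t_n$ both orderings produce the same underlying point set, namely the set of left endpoints of $\rho_{L,S}^n\pi$, so that the enumeration mismatch contributes only an additional $O(1/N)$ term.

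The second step is a direct discrepancy estimate via the Ostrowski $\beta$-representation: write $N=\sum b_i q_i$ with $0\leq b_i\leq L$ and sum contributions per scale. The inequality $q_n\geq\alpha^{n-1}$ caps the number of scales by $\log(N)/\log\alpha$, which produces the term $\log(N)/(N\log\alpha)$; at each scale the bound $b_i\leq L$ combined with the actual growth rate of $q_n$ (which is of order $L+1$ for large $L$) contributes the complementary term $L\log(N)/(N\log(L+1))$. Summing yields precisely the coefficient $1/\log\alpha+L/\log(L+1)$ in front of $\log(N)/N$.

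The main obstacle I foresee is tracking the additive $O(1/N)$ constants through the three reductions -- the symmetrization, the $LS$-to-Kronecker reordering, and the splitting of the Ostrowski sum -- so that they combine to the stated $3/N$ rather than something larger. The leading-order coefficient is immediate from the decomposition above, but collecting the boundary effects at each scale, as well as the contribution of the last (possibly incomplete) milestone block where the $LS$- and Kronecker-orderings do not visibly agree, requires some care.
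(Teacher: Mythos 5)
Your second step is essentially the paper's argument, but your first step contains a genuine gap that the paper's proof is specifically structured to avoid. You reduce the problem to the discrepancy of the \emph{classical} Kronecker sequence by arguing that the LS-ordering and the symmetrized Kronecker ordering agree as sets at the milestones $N=t_n$, so that "the enumeration mismatch contributes only an additional $O(1/N)$ term." This is false for general $N$: between milestones the two orderings enumerate the $t_{n+1}-t_n = Lq_n$ new points in different orders, and $Lq_n$ is a constant fraction of $N$, not $O(1)$. For $t_n < N < t_{n+1}$ the first $N$ points of the two orderings can differ as sets by $\Theta(N)$ elements, so their discrepancies can differ by a constant, and no additive $O(1/N)$ correction repairs this. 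You partly flag this yourself ("the last (possibly incomplete) milestone block... requires some care"), but it is not a bookkeeping issue -- the reduction as stated does not go through. (Your rotation step, by contrast, is harmless: for the extreme discrepancy $D_N$ as defined in the paper, a rigid rotation mod $1$ preserves the discrepancy exactly, by passing to complements of wrap-around arcs.)

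The paper never compares the two orderings at general $N$. Instead it applies the Ostrowski decomposition $N=\sum_i c_iq_i$ \emph{directly to the LS-ordered sequence}: the first $N$ LS-points are cut into $c_i$ consecutive blocks of length $q_i$, and Lemma~\ref{lem2} is what guarantees that each such block, \emph{as a set}, consists of $q_i$ consecutive multiples $\{nz\}$ (or $\{-nz\}$), so the classical per-block bound $D_{q_i}<1/q_{i-1}+1/q_i$ from \cite{Nie92}, Theorem 3.3 applies; since block discrepancy depends only on the block as a set, the internal LS-ordering is irrelevant and the triangle inequality for discrepancy over consecutive blocks finishes the count. Your proposal would be correct if you replaced your step 1 by this direct block decomposition of the LS-ordering (which is exactly the role Lemma~\ref{lem2} plays); as written, the passage to the classically ordered Kronecker sequence is the missing idea.
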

Corollary~\ref{cor2} indeed improves the discrepancy bounds for $LS$-sequences given in Theorem~\ref{thm:IZ} in the specific case $S=1$. Both results yield inequalities of the type $$D_N(\xi_n) \leq \frac{\gamma}{N} + \frac{\delta \log(N)}{N}$$
For instance, if $L=S=1$ then Corollary~\ref{cor2} implies $\gamma = 3$ and $\delta = 2.776$ while according to Theorem~\ref{thm:IZ} the discrepancy can be bounded by $\gamma = 3.447$ and $\delta = 3.01$. The difference between the two results gets the more prominent the larger $L$ is: If $L=10$ and $S=1$ we get $\gamma = 3$ and $\delta = 5.51$ while Theorem~\ref{thm:IZ} only implies $\gamma = 22.87$ and $\delta = 9.03$.\footnote{We obtain different numerical values than in \cite{IZ15}. We checked our result on different computer algebra systems.} 

\section{Proof of the main results} \label{cha:Proof}

\paragraph{Continued fractions.} Recall that every irrational number $z$ has a uniquely determined infinite continued fraction expansion
$$z = a_0 + (1+a_1/(a_2+\ldots)) =: [a_0;a_1;a_2;\ldots],$$
where the $a_i$ are integers with $a_0 = \lfloor z \rfloor$ and $a_i \geq 1$ for all $i \geq 1$. The sequence of \textbf{convergents} $(r_i)_{i \in \NN}$ of $z$ is defined by
$$r_i = [a_0;a_1;\ldots;a_i].$$
The convergents $r_i = p_i/q_i$ with $\gcd(p_{i},q_{i}) = 1$ can also be calculated directly by the recurrence relation
\begin{align*}
& p_{-1} = 0, \qquad p_{0} = 1, \qquad p_{i} = a_ip_{i-1} + p_{i-2}, \quad i \geq 0\\
& q_{-1} = 1, \qquad q_{0} = 0, \qquad q_{i} = a_iq_{i-1} + q_{i-2}, \quad i \geq 0.
\end{align*}
\begin{rem} If $S=1$, then $\beta^2 + L\beta - 1 = 0$ or equivalently 
	$$\frac{1}{\beta} = L + \beta$$
	holds. Thus it follows that $a_i = L$ in the continued fraction expansion of $\beta$ for all $i=1,2,\ldots$.
\end{rem}
From now on the continued fraction expansion of $\beta$ is studied and it is always tacitly assumed, that the $q_i$'s are the denominators of the convergents of $\beta$.Although the proof of the following lemma is rather obvious we write it down here explictly because our proof of the main theorem is based on this arithmetic observation.

\begin{lem} \label{lem1} Let $n \in \NN_0$. If $S=1$ then we have
	\begin{itemize}
		\item[(i)] $\beta^{2n+1} + q_{2n} = q_{2n+1} \beta$.
		\item[(ii)] $\beta^{2n} - q_{2n-1} = - q_{2n} \beta$
	\end{itemize}
\end{lem}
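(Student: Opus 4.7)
The plan is to rewrite the two claimed identities as
$$\beta^{2n+1} = q_{2n+1}\beta - q_{2n}, \qquad \beta^{2n} = q_{2n-1} - q_{2n}\beta,$$
and prove them simultaneously by induction on $n$. The base case $n=0$ follows directly from the initial values $q_{-1}=1$, $q_0=0$ and $q_1 = a_1 q_0 + q_{-1} = 1$ (using that $a_0 = \lfloor\beta\rfloor = 0$, since $\beta\in(0,1)$, and $a_1 = L$ by the preceding remark): (i) reduces to $\beta = \beta$ and (ii) to $1 = 1$.

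For the inductive step I would use exactly two ingredients. First, the defining equation $L\beta+\beta^2=1$ for $S=1$, rewritten as
$$\beta^2 = 1 - L\beta.$$
Second, the recurrence $q_{i+1} = L q_i + q_{i-1}$ for the convergents, which holds uniformly for $i\geq 1$ because by the remark every partial quotient of $\beta$ equals $L$. Assuming (i) at $n$, I would multiply by $\beta$ and substitute $\beta^2 = 1-L\beta$:
$$\beta^{2n+2} = q_{2n+1}\beta^2 - q_{2n}\beta = q_{2n+1} - (L q_{2n+1} + q_{2n})\beta = q_{2n+1} - q_{2n+2}\beta,$$
which is precisely (ii) at $n+1$. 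Multiplying this in turn by $\beta$ and again reducing $\beta^2$ gives
$$\beta^{2n+3} = q_{2n+1}\beta - q_{2n+2}\beta^2 = (q_{2n+1} + L q_{2n+2})\beta - q_{2n+2} = q_{2n+3}\beta - q_{2n+2},$$
which is (i) at $n+1$. This closes the induction.

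Honestly I do not expect any real obstacle: the whole argument is a direct computation built on the two elementary identities $\beta^2 = 1 - L\beta$ and $q_{i+1} = L q_i + q_{i-1}$. The only conceptual point worth stating is that (i) and (ii) are the correct expressions for the even and odd powers of $\beta$ in the $\mathbb{Z}$-basis $\{1,\beta\}$ of $\mathbb{Z}[\beta]$, so that each identity mechanically produces the next upon multiplication by $\beta$. The mildest bookkeeping subtlety is to remember the index convention $q_{-1}=1$, $q_0=0$ when checking the base case of (ii).
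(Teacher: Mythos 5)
Your proof is correct and follows essentially the same route as the paper: induction on $n$ using the relation $\beta^2 = 1 - L\beta$ together with the convergent recurrence $q_{i+1} = Lq_i + q_{i-1}$ (valid since every partial quotient equals $L$). The only difference is organizational --- you interleave the two parts, deducing (ii) at $n+1$ from (i) at $n$ and then (i) at $n+1$ from that by multiplying by $\beta$ and reducing $\beta^2$, whereas the paper runs a separate (and somewhat longer) induction for each part; your arrangement is, if anything, slightly cleaner.
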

\begin{proof}
	We prove both claims by induction.\\[12pt]
	(i) The identity is trivial for $n=0$. So we come to the induction step
	\begin{align*}
	\beta^{2n+1} + q_{2n} & = \beta^2\beta^{2n-1} + q_{2n} \left(\beta^2 + L\beta\right)\\
	& = \beta^2 \left(\beta^{2n-1} + q_{2n}\right) + Lq_{2n} \beta\\
	& = \beta^2 \left(q_{2n-1}\beta- q_{2n-2} + q_{2n} \right) + Lq_{2n} \beta\\
	& = \beta^2 \left(q_{2n-1}\beta + Lq_{2n-1} \right) + Lq_{2n}\beta\\
	& = q_{2n-1} \beta(\beta^2 + L\beta) + Lq_{2n}\beta\\
	& = q_{2n+1} \beta.
	\end{align*}
	(ii) The proof works analogously as in (i). We have $\beta^2 + 1 = -L\beta$ and
	\begin{align*}
	\beta^{2n} - q_{2n-1} & = \beta^2\beta^{2(n-1)} - q_{2n-1} \left(\beta^2 + L\beta\right)\\
	& = \beta^2 \left( \beta^{2(n-1)} - q_{2n-1} \right) - Lq_{2n-1} \beta\\
	& = \beta^2 \left( - q_{2n-2} \beta + q_{2n-3} - q_{2n-1} \right) - Lq_{2n-1} \beta\\
	& = \beta^2 \left( - q_{2n-2} \beta - Lq_{2n-2} \right) - Lq_{2n-1} \beta\\
	& = -q_{2n-2} \beta \left(\beta^2 + L\beta \right) - Lq_{2n-1} \beta\\
	& = -q_{2n} \beta.
	\end{align*}  
\end{proof}
\begin{exa} Consider the Kakutani-Fibonacci sequence from Example~\ref{exa:KF_sequence}. If we denote by $(f_n)_{n \geq 0}$ the Fibonacci sequence, i.e. the sequence inductively defined by $f_0 = 0, f_1 =1$ and $f_n = f_{n-1} + f_{n-2}$ for $n \geq 2$, we have that $q_i = f_i$ for all $i=1,2,\ldots$.	
\end{exa}
If $S=1$, then we can furthermore deduce from Definition~\ref{def:ordering_LS} that $t_{n+1} = t_n + L l_n$ and that $q_{n-1} = l_{n}$. Starting from $\xi_1$ we split the $LS$-sequence into consecutive blocks where the first block $B_1$ is of length $1$ and the $n$-th block $B_n$ for $n \geq 2$ is of length $L l_{n} = L q_{n-1} = t_{n} - t_{n-1}$. We now study the blocks $B_n$
\begin{align*}
B_n = & \psi_{1,0}^{(n)}	(\xi_1), \ldots, \psi_{1,0}^{(n)}	(\xi_{l_{n-1}}), \ldots, \psi_{L,0}^{(n)}(\xi_1), \ldots, \psi_{L,0}^{(n)}	(\xi_{l_{n-1}})\\
= & \ \xi_1 + \beta^{n-1}, \ldots, \xi_{l_{n-1}} + \beta^{n-1}, \ldots, \xi_1 + L \beta^{n-1}, \ldots, \xi_{l_{n-1}} + L \beta^{n-1}.
\end{align*}
\begin{lem} \label{lem2} Let $n \in \NN$. 
\begin{itemize}
	\item[(i)] If $n=2k+1$ is odd, then $B_n$ considered as a set consists of the $L \cdot q_{2k}$ elements $\left\{ -q_{2k-1} \beta\right\}, \left\{ -(q_{2k-1} + 1) \beta \right\}, \ldots, \left\{ -(q_{2k+1}-1)  \beta \right\}$ (respectively of the element $0$ if $n=1$).
	\item[(ii)] If $n=2k$ is even, then $B_n$ considered as a set consists of the $L \cdot q_{2k-1}$ elements $\left\{ (q_{2k-2}+1) \beta\right\}, \left\{ (q_{2k-2} + 2) \beta \right\}, \ldots, \left\{ q_{2k} \beta \right\}$.
\end{itemize}
\end{lem}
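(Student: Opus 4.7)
My plan is to prove both parts simultaneously by induction on $n$, with the engine being Lemma~\ref{lem1}, read modulo $1$ as the congruences
$$\beta^{2n+1} \equiv q_{2n+1}\beta \pmod 1 \quad \text{and} \quad \beta^{2n} \equiv -q_{2n}\beta \pmod 1.$$
These identities convert every integer polynomial in $\beta$ arising in the construction of $\Lambda_{L,1}^{n}$ into a fractional part of an integer multiple of $\beta$, which is precisely the form demanded by the conclusion of the lemma. The base case $n=1$ is immediate from $B_1=\{0\}=\{\{0\cdot\beta\}\}$.

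For the inductive step I would actually carry a stronger hypothesis than the one stated in the lemma: namely that $\Lambda^{n-1}$ as a set equals $\{\{k\beta\}:k\in K_{n-1}\}$ for some explicit consecutive integer interval $K_{n-1}\subset\ZZ$, and moreover that the first $l_{n-1}$ entries $\xi_1^{(n-1)},\ldots,\xi_{l_{n-1}}^{(n-1)}$ (the left endpoints of the long intervals of $\rho_{L,1}^{n-1}\pi$) correspond to a consecutive sub-interval of $K_{n-1}$ of length $l_{n-1}$. Every element of $B_n$ then has the form $\xi_j^{(n-1)}+i\beta^n$ with $1\leq i\leq L$; writing $\xi_j^{(n-1)}=\{k_j\beta\}$, the congruence for $\beta^n$ rewrites this as $\{(k_j\pm iq_n)\beta\}$, the sign being fixed by the parity of $n$. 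Since the magnitude $q_n$ of the shift per unit of $i$ equals the length $l_{n-1}$ of the $k_j$-block, the $L$ translates by $\pm q_n,\pm 2q_n,\ldots,\pm Lq_n$ tile a single fresh consecutive integer block of length $Lq_n$ without gaps or overlaps, attached to $K_{n-1}$ on the positive or the negative side according to whether $n$ is odd or even. This yields exactly the two cases of the lemma.

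The main obstacle is the structural part of the strengthened induction hypothesis, i.e.\ that the first $l_{n-1}$ entries of $\Lambda^{n-1}$ really do correspond to a consecutive $k$-block of length $l_{n-1}$. To maintain this one must track carefully how, when passing from $\Lambda^{n-1}$ to $\Lambda^n$, the three contributions to the long endpoints of $\rho_{L,1}^n\pi$ -- the old long endpoints (which remain long), the old short endpoints (which become long because $\beta^n$-intervals are now the long ones), and the newly produced endpoints from $\psi^{(n)}_{1,0},\ldots,\psi^{(n)}_{L-1,0}$ -- concatenate into a single consecutive block of length $l_n$. This reduces to checking that each fresh translate attaches exactly at an endpoint of the previous block, which is forced by the arithmetic identities $l_{n-1}+s_{n-1}=t_{n-1}$ and $l_n=Ll_{n-1}+s_{n-1}$ together with the choice of shift magnitude $q_n$.
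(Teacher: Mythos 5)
Your plan is viable and rests on the same two pillars as the paper's proof --- induction on $n$ together with the congruences $\beta^{2n+1}\equiv q_{2n+1}\beta$ and $\beta^{2n}\equiv -q_{2n}\beta$ from Lemma~\ref{lem1} --- but it closes the induction by a genuinely different mechanism, and the comparison is instructive. The paper never establishes (nor needs) your strengthened hypothesis that the first $l_{n-1}$ entries index a consecutive integer block. It uses only the crude consequence of the plain induction hypothesis that each base point $\xi_m$ equals $\left\{ r\beta\right\}$ with $r$ in the window $[-(q_{2k-1}-1),\,q_{2k}]$, so that every new point $\left\{ (r-jq_{2k})\beta\right\}$ of an odd block has index in $[-(q_{2k+1}-1),\,0]$. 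Since the $Lq_{2k}$ new points are pairwise distinct and distinct from all earlier points of the $LS$-sequence, and since the $q_{2k-1}$ indices in $[-(q_{2k-1}-1),\,0]$ are already occupied by the earlier odd blocks, the new points must fill exactly the remaining $q_{2k+1}-q_{2k-1}=Lq_{2k}$ indices; this is assertion (i), and (ii) is symmetric. That injectivity-plus-counting step is precisely what lets the paper bypass the bookkeeping you single out as your main obstacle. Your tiling route does work --- the coincidence that the number of base points equals the shift magnitude $q_{n-1}$ really does make the $L$ translates abut --- but your sketch of the structural hypothesis omits the one point where it could break: you must know not only that the first $l_{n-1}$ entries index a consecutive sub-block of $K_{n-1}$, but at which end of $K_{n-1}$ that sub-block sits (the bottom end when the next block is odd, the top end when it is even, alternating with parity); otherwise the first translate need not attach flush to the boundary of $K_{n-1}$, and the union could leave a gap or collide with old points. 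This parity-dependent positioning can be tracked, but it is exactly the work the paper's counting argument renders unnecessary, so I would recommend replacing your tiling step by that shortcut.
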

Before going into the rather technical details of the proof, let us explain its idea for the example of the Kakutani-Fibonacci sequence ($L=S=1$). This sequence of points is given by
$$\underbrace{0}_{B_0}, \underbrace{\beta}_{B_1}, \underbrace{\beta^2}_{B_2}, \underbrace{\beta^3,\beta+\beta^3}_{B_3}, \underbrace{\beta^4,\beta+\beta^4,\beta^2+\beta^4}_{B_4}, \ldots.$$
Using $\beta + \beta^2$ this can be easily re-written as
$$\underbrace{0}_{B_0}, \underbrace{\beta}_{B_1},  \underbrace{1-\beta}_{B_2}, \underbrace{2\beta - 1 ,3 \beta - 1}_{B_3}, \underbrace{2-3\beta, 2-2\beta, 3- 4\beta}_{B_4}, \ldots.$$
\begin{proof}
  The two assertions are proved simultaneously by induction on $k$. For $n=1,2$ the claim is obvious from definition, since $\xi_1 = 0$ and $\xi_2 = \beta, \ldots, \xi_L = L\beta$. Let $k \geq 2$ and $n=2k+1$ be odd. If we denote by $\equiv$ equivalence modulo $1$ we have for $m \in \left\{ 0, \ldots, l_{n-1} \right\}$ by Lemma~\ref{lem1} and induction hypothesis
  \begin{align*}
	  \xi_m + j\beta^{2k+1-1} \equiv \xi_m - jq_{2k}\beta \equiv (r - jq_{2k}) \beta,
  \end{align*}
  with $-q_{2k-1} +1 \leq r \leq -q_{2k-3}$ and $q_{2k-2} + 1\leq r \leq q_{2k}$ and $1 \leq j \leq L$. Thus it follows that
  \begin{align*}
	  &-q_{2k-1} + 1  -Lq_{2k}  \leq r - jq_{2k} \leq q_{2k} - q_{2k}\\
	  \Leftrightarrow & -(q_{2k+1} - 1)\leq r - jq_{2k} \leq 0.
  \end{align*}
  Since the sequence is injective, the claim follows for odd $n$. So let $n=2k+2$ be even. Then we use again Lemma~\ref{lem1} and induction hypothesis to derive
    \begin{align*}
    \xi_m + j\beta^{2k+2-1} \equiv \xi_m + jq_{2k+1}\beta \equiv (r + jq_{2k+1}) \beta,
    \end{align*}
  with $-q_{2k-1} +1 \leq r \leq -q_{2k-3}$ and $q_{2k-2} + 1 \leq r \leq q_{2k}$ and $1 \leq j \leq L$. This completes the induction since
    \begin{align*}
    &-q_{2k-1} + 1  + q_{2k+1}  \leq r + jq_{2k+1} \leq q_{2k} + Lq_{2k+1}\\
    \Leftrightarrow & 1 \leq r + jq_{2k+1} \leq q_{2k+2}.
    \end{align*}
\end{proof}
\begin{proof}[Proof of Theorem~\ref{main_thm}] If $S = 1$ the $LS$-sequence is indeed a reordering of the symmetrized Kronecker sequence by Lemma~\ref{lem2}. So let $S \geq 2$ and $L \geq S$. Then $\beta$ is irrational and the recurrence relation
	\begin{align} \label{eq1}
		\beta^{2} = \frac{1 - L\beta}{S}.
	\end{align}	
holds. Hence the $LS$-sequence cannot be a reordering of a van der Corput sequence (which consists only of rational number).\\[12pt] Now assume that the $LS$-sequence is the reordering of a (possibly symmetrized) Kronecker sequence $\left\{ n \alpha \right\}$ for some $\alpha \in \RR$. Since $\alpha$ itself has to be an element of the $LS$-sequence, there exists an $n \in \NN$ such that $\alpha$ can be uniquely written in the form
	$$\alpha = \sum_{k=1}^n \alpha_k \beta^k$$
with $\alpha_k \in \left\{0,\ldots,L\right\}$ for $k=1,\ldots,n$ and $\alpha_n \neq 0$. By~\eqref{eq1} we have the equality $\beta^k = x_k \beta + y_k$ with $x_k,y_k \in \QQ$ and $s^k x_k, s^k y_k \in \ZZ$. Thus, $\alpha$ itself can be rewritten as $\alpha = x_{\alpha} \beta  + y_{\alpha}$ with $x_{\alpha},y_{\alpha} \in \QQ$ and $s^n x_{\alpha}, s^n y_{\alpha} \in \ZZ$. However, $\beta^{n+1}$, which is an element of the $LS$-sequence, cannot be an element of $\left\{ n \alpha \right\}_n$ since $\beta^{n+1} = x_{n+1} \beta + y_{n+1}$, where at least one of $x_{n+1}$ and $y_{n+1}$ has denominator $s^{n+1}$. This is a contradiction.
\end{proof}
A main advantage of the approach via symmetrized Kronecker sequence is that it yields a possibility to calculate improved discrepancy bounds, namely Corollary~\ref{cor2}.
\begin{proof}[Proof of Corollary~\ref{cor2}]
	We imitate the proofs in \cite{Nie92}, Theorem 3.3 and \cite{KN74}, Theorem 3.4 respectively and leave away here the technical details that are explained therein very nicely: The number $N$ can be represented in the form
	$$ N = \sum_{i=0}^{l(N)} c_i q_i,$$
	where $l(N)$ is the unique non-negative integer with $q_{l(N)} \leq N < q_{l(N)+1}$ and where the $c_i$ are integers with $0 \leq c_i \leq L$. Let $LS_N$ denote the set consisting of the first $N$ numbers of the $LS$-sequence. We decompose $LS_N$ into blocks of consecutive terms, namely $c_i$ blocks of length $q_i$ for all $0 \leq i \leq l(N)$. Consider a block of length $q_i$ and denote the corresponding point set by $A_i$. If $i$ is odd, $A_i$ consists of the fractional parts $\left\{ nz \right\}$ with $n=n_i,n_i+1,\ldots,n_i+q_{i}-1$ according to Lemma~\ref{lem2}. As shown in the proof of \cite{Nie92}, Theorem 3.3., this point set has discrepancy
	$$D_{q_i}(A_i) < \frac{1}{q_{i-1}} + \frac{1}{q_{i}}.$$
	If $i$ is even, $A_i$ consists of the fractional parts $\left\{ -nz \right\}$ with again $n=n_i,n_i+1,\ldots,n_i+q_{i}-1$ by Lemma~\ref{lem2}. Since $z$ and $-z$ have the same continued fraction expansion up to signs, we also have
	$$D_{q_i}(A_i) < \frac{1}{q_{i-1}} + \frac{1}{q_{i}}.$$
	Analogous calculations as in \cite{KN74} then yield the assertion.
\end{proof}
Asymptotically we deduce the following behaviour, again improving the more general result of \cite{IZ15} in the special case $S=1$.
\begin{cor}
	If $S=1$, then we obtain
	$$\lim_{N \to \infty} \frac{ND_N(\xi_n)}{\log N} \sim \frac{L}{\log(L)}$$
	as $L \to \infty$.
\end{cor}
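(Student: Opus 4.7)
The plan is to derive the asymptotic directly from Corollary~\ref{cor2} by first passing to the limit in $N$ and then in $L$. Starting from the bound
$$D_N(\xi_n) \leq \frac{3}{N} + \left( \frac{1}{\log(\alpha)} + \frac{L}{\log(L+1)} \right) \frac{\log(N)}{N},$$
I would multiply through by $N/\log N$ to obtain
$$\frac{N D_N(\xi_n)}{\log N} \leq \frac{3}{\log N} + \frac{1}{\log(\alpha)} + \frac{L}{\log(L+1)},$$
and then let $N \to \infty$ so that the $3/\log N$ term vanishes. This leaves the inequality
$$\limsup_{N \to \infty} \frac{N D_N(\xi_n)}{\log N} \leq \frac{1}{\log(\alpha)} + \frac{L}{\log(L+1)}.$$

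Next I would analyse the right-hand side as $L \to \infty$. The constant $\alpha = (1+\sqrt{5})/2$ does not depend on $L$, so $1/\log(\alpha)$ is a bounded quantity and contributes only to lower-order terms. The second summand satisfies $L/\log(L+1) \sim L/\log L$, which dominates. Dividing both sides by $L/\log L$ and taking the limit in $L$ yields the upper asymptotic $\limsup_N N D_N / \log N \lesssim L/\log L$.

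For the matching lower bound (needed to upgrade $\lesssim$ to $\sim$), I would exploit Theorem~\ref{main_thm}: the $LS$-sequence is a reordering of the symmetrized Kronecker sequence associated with $\beta = [0;L,L,L,\ldots]$. Classical asymptotic discrepancy results for Kronecker sequences (cf.\ \cite{KN74}, \cite{Nie92}) express $\limsup N D_N / \log N$ essentially as a weighted average of the partial quotients; with all partial quotients equal to $L$, one obtains a lower bound of order $L/|\log \beta|$. Combined with $|\log \beta| \sim \log L$ as $L \to \infty$ (since $\beta \sim 1/L$ follows from $L\beta + \beta^2 = 1$), this matches the upper estimate up to the constant.

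The step requiring the most care is the lower bound: a priori, the specific reordering in Definition~\ref{def:ordering_LS} could in principle yield a smaller discrepancy than the natural ordering of the Kronecker sequence. The cleanest way around this is to observe, via Lemma~\ref{lem2}, that each block $B_n$ is precisely a contiguous arithmetic block $\{n\beta\},\ldots,\{(n+q_{n-1}-1)\beta\}$ (up to sign), so the discrepancy analysis in the proof of Corollary~\ref{cor2} is tight: each block $A_i$ realises discrepancy of order $1/q_{i-1}$, and summing these contributions across the $c_i q_i$ decomposition of $N$ gives a lower bound of the same order as the upper bound, completing the asymptotic $\sim L/\log L$.
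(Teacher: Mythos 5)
Your first paragraph --- dividing the bound of Corollary~\ref{cor2} by $\log N/N$, letting $N\to\infty$ to kill the $3/\log N$ term, and observing that $1/\log(\alpha)$ is an $L$-independent constant while $L/\log(L+1)\sim L/\log L$ --- is exactly the deduction the paper intends: the corollary is stated there as an immediate consequence of Corollary~\ref{cor2} with no further argument. That part is correct and complete for the upper estimate $\limsup_{N} ND_N(\xi_n)/\log N\leq 1/\log(\alpha)+L/\log(L+1)$.

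The gap is in your lower bound. You are right that a literal two-sided ``$\sim$'' would require one (and the paper itself does not supply it), but neither of your routes works as stated. First, the classical lower bounds for $(\{n\beta\})_n$ concern that sequence in its \emph{natural} ordering; the discrepancy of the first $N$ terms is not invariant under reordering, so these results do not transfer to the $LS$-ordering. Second, and more seriously, ``summing the block contributions'' is not a valid lower-bound argument: discrepancy is subadditive over a decomposition into blocks, $ND_N\leq\sum_i N_iD_{N_i}(A_i)$, not superadditive, and here the cancellation between blocks is essentially total at the natural checkpoints. Indeed, for $N=t_n$ the first $t_n$ points of the $LS$-sequence are precisely the left endpoints of the partition $\rho_{L,S}^n\pi$, one in each interval of length $\beta^n$ or $\beta^{n+1}$, so $D_{t_n}=O(\beta^n)=O(1/t_n)$ and $t_nD_{t_n}/\log t_n\to 0$ along this subsequence. (This also shows the limit in the statement cannot exist as written and must be read as a $\limsup$.) A genuine lower bound of order $L/\log L$ would require exhibiting specific intermediate values of $N$ --- a complete partition plus a suitable fraction of the next block --- and bounding the discrepancy there from below, e.g.\ via the three-distance theorem; that is a separate argument which neither your proposal nor the paper carries out.
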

Finally, we would like to point out the fact that it follows immediately from our approach that the Kakutani-Fibonacci sequence is the reordering of an orbit of an ergodic interval exchange transformation. In \cite{CIV14}, it was shown that a much more complicated interval exchange transformation is necessary in order to get the original ordering given in Definition~\ref{def:ordering_LS}.
\begin{cor} For $L=1$, the $LS$-sequence is always a reordering of an orbit of an ergodic interval exchange transformation. \end{cor}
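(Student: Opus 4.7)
The plan is to exhibit, for each $S\in\NN$, an ergodic interval exchange transformation $T$ of $[0,1)$ whose forward orbit of the basepoint $0$ coincides, as a set, with the $LS$-sequence for $L=1$. The central observation is that for $L=1$ the refinement $\rho_{1,S}$ is governed by the primitive substitution
$$\sigma\colon a\longmapsto ab^{S},\qquad b\longmapsto a$$
on the two-letter alphabet $\{a,b\}$: the recurrences for $l_{n}$ and $s_{n}$ reduce in this case to $l_{n}=l_{n-1}+s_{n-1}$ and $s_{n}=Sl_{n-1}$, which are precisely the abelianised letter counts of $\sigma$. The associated substitution matrix has Perron--Frobenius eigenvalue $1/\beta$, and its normalised positive left eigenvector is proportional to $(\beta,\beta^{2})$, matching the two interval lengths occurring in $\pi_{n}=\rho_{1,S}^{n}\pi$.

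I would take $T$ to be the geometric realisation of the stationary Bratteli--Vershik adic system of $\sigma$, built directly on the partition $\rho_{1,S}\pi$ (one long cell of length $\beta$ and $S$ short cells of length $\beta^{2}$). On each cell, $T$ acts as the translation prescribed by the Vershik successor rule. These translations are well-defined and compatible across every refinement level because the cell lengths $(\beta,\beta^{2})$ are precisely the entries of the Perron eigenvector; this is what lets the combinatorial successor on finite Vershik paths lift to an honest piecewise translation of $[0,1)$. Primitivity of $\sigma$ then yields minimality and unique ergodicity of $T$ by standard substitution theory, so $T$ is ergodic as an IET.

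The last step would be to check $\{T^{k}(0):k\geq 0\}=\{\xi_{k}:k\in\NN\}$ as subsets of $[0,1)$. By Definition~\ref{def:ordering_LS}, the $LS$-sequence is exactly the union over $n$ of the left endpoints of $\pi_{n}$. On the other side, the Vershik orbit visits every cell of $\pi_{n}$ exactly once during its first $t_{n}$ iterations, by primitivity, and therefore hits every such left endpoint as well. The two sets thus coincide and the $LS$-sequence is a reordering of the $T$-orbit of $0$.

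The principal obstacle is making the Bratteli--Vershik-to-IET realisation fully rigorous for $S\geq 2$: here $1/\beta$ is not a Pisot number in the usual non-trivial sense (it is the rational integer $2$ when $S=2$ and genuinely non-Pisot for $S\geq 3$), so the familiar ``Pisot substitution $\Rightarrow$ toral translation'' shortcut is unavailable, and one has to extract the piecewise-translation structure directly from the self-similar nested partitions $\pi_{n}$. In the Sturmian case $S=1$, by contrast, $\sigma$ is the Fibonacci substitution and $T$ is simply rotation by $\beta$, so the corollary follows immediately from Theorem~\ref{main_thm}.
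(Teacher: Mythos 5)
Your proposal reads the statement literally as ``$L=1$, arbitrary $S$'' and tries to realise the substitution $a\mapsto ab^{S}$, $b\mapsto a$ as an interval exchange transformation via its stationary Bratteli--Vershik model. The step you yourself flag as the ``principal obstacle'' --- turning the adic successor map into an honest piecewise translation of $[0,1)$ when $S\geq 2$ --- is exactly the missing content: primitivity gives you a minimal, uniquely ergodic subshift, but nothing in the sketch produces the finitely many translation constants that an IET requires. Worse, this gap cannot be closed, because the literal statement is false for some $S\geq 2$: for $(L,S)=(1,2)$ one has $2\beta^{2}+\beta-1=0$, hence $\beta=1/2$, and every term of the $LS$-sequence is a dyadic rational. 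An infinite orbit of an interval exchange transformation cannot consist entirely of dyadic rationals: each translation constant actually visited would be a difference of two dyadic rationals, hence dyadic with a fixed denominator, confining the orbit to the finite set $2^{-k}\ZZ\cap[0,1)$ for some $k$. You even compute $1/\beta=2$ for $S=2$ but do not draw this conclusion; the same degeneration occurs whenever $1+4S$ is a perfect square.

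The corollary is evidently intended for $S=1$ (with $L$ arbitrary), as the paper's own two-line proof and the surrounding discussion of Theorem~\ref{main_thm} make clear: for $S=1$ the $LS$-sequence is a reordering of the symmetrized Kronecker sequence $\left(\left\{ n\beta\right\}\right)_{n\in\ZZ}$, which is precisely the two-sided orbit of $0$ under the rotation $R_{\beta}\colon x\mapsto x+\beta \pmod 1$; this map is an exchange of two intervals and is ergodic because $\beta=[0;L,L,\ldots]$ is irrational. Your final sentence records exactly this for the Fibonacci case, and that observation --- not the substitution machinery --- is the entire intended proof. If you restrict to $S=1$ and promote that closing remark to the argument itself, you recover the paper's proof; as written, your proposal establishes the intended statement only in passing and leaves the (false) literal statement unproven.
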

\begin{proof} The map $R_\alpha: x \mapsto x + \alpha \ (\mod 1)$, the rotation of the circle by $\alpha$, is ergodic for $\alpha \notin \QQ$, see e.g. \cite{EW11}, Example 2.2. Moreover, it is an interval exchange transformation, compare e.g. \cite{Via06}.
\end{proof}

\paragraph{Acknowledgments.} The author thanks Soumya Bhattacharya, Anne-Sophie Krah, Zoran Nikoli\'{c} and Florian Pausinger for their comments on an earlier version of this paper. Furthermore, I am grateful to the referee for his suggestions.

\textsc{Hochschule Ruhr West, Duisburger Str. 100, D-45479 M\"ulheim an der Ruhr}\\
\textit{E-mail address:} \texttt{christian.weiss@hs-ruhrwest.de}

\end{document}